\documentclass[12pt,leqno]{amsart}

\usepackage[T1]{fontenc}
\usepackage{lmodern}
\usepackage{amsmath,amssymb,mathtools}
\usepackage{microtype}
\usepackage[hidelinks]{hyperref}
\hypersetup{
  pdftitle={Stability in recovering the order of a time-fractional evolution equation},
  pdfauthor={Ravshan Ashurov and Masahiro Yamamoto}
}

\allowdisplaybreaks
\numberwithin{equation}{section}

\newtheorem{theorem}{Theorem}[section]
\newtheorem{proposition}[theorem]{Proposition}
\newtheorem{lemma}[theorem]{Lemma}
\newtheorem{corollary}[theorem]{Corollary}
\theoremstyle{definition}

\theoremstyle{remark}
\newtheorem{remark}[theorem]{Remark}

\newcommand{\cA}{\mathcal{A}}
\newcommand{\cP}{\mathcal{P}}
\newcommand{\cM}{\mathcal{M}}
\newcommand{\D}{\mathcal{D}}
\newcommand{\norm}[1]{\lVert #1\rVert}
\newcommand{\abs}[1]{\lvert #1\rvert}
\newcommand{\ip}[2]{\bigl(#1,#2\bigr)_X}
\newcommand{\dd}{\,\mathrm{d}}

\title[Stability in recovering a fractional order]
{Stability in recovering\\
the order of a time-fractional evolution equation}

\author[R. Ashurov]{Ravshan Ashurov}
\address{V. I. Romanovskiy Institute of Mathematics, Uzbekistan Academy of Sciences, 100174 Tashkent, Uzbekistan}
\address{School of Engineering, Central Asian University, 111221 Tashkent, Uzbekistan}
\author[M. Yamamoto]{Masahiro Yamamoto}
\address{Graduate School of Mathematical Sciences, The University of Tokyo,
Komaba, Meguro, Tokyo 153-8914, Japan}
\address{Department of Mathematics, Faculty of Arts and Sciences,
Zonguldak B\"ulent Ecevit University, Zonguldak 67100, Turkey}
\email{myama@ms.u-tokyo.ac.jp}

\subjclass[2020]{Primary 35R30; Secondary 26A33, 35R11, 47D06}
\keywords{time-fractional evolution equation, inverse problem, fractional order,
 stability, Mittag--Leffler function}

\begin{document}

\begin{abstract}
We consider the recovery of the fractional order in a time-fractional evolution
 equation with a source of the form $p(t)f$, where the temporal factor $p$ is
also unknown but belongs to a bounded admissible class and has a prescribed
nonzero value at $t=0$.  From one scalar observation we prove a uniform
small-time expansion and obtain a H\"older estimate
for the fractional order.  More precisely, if a fixed derivative order
$\gamma$ lies below every admissible fractional order, then the stability
exponent is $\alpha_+/(2\alpha_+-\gamma)$; for the undifferentiated observation
this gives the exponent $1/2$.  We also show that differentiating the data to
an order larger than all admissible fractional orders is supercritical: the
corresponding derivative is unbounded near $t=0$ whenever the two fractional
orders are different.  A uniform estimate
for the two-parameter Mittag--Leffler function, including the diagonal case,
is proved in the appendix.
\end{abstract}

\maketitle

\section{Introduction}

Let $X$ be a separable Hilbert space, and let $0<\alpha<1$. For
$w\in L^2(0,T;X)$, the fractional Riemann--Liouville integral of order
$\alpha$ is defined by
\[
J^\alpha w(t)
:=
\frac{1}{\Gamma(\alpha)}
\int_0^t (t-s)^{\alpha-1}w(s)\,ds,
\qquad 0<t<T,
\]
where the integral is understood in the sense of the Bochner integral;
see, e.g., \cite{Liz}. The operator
\[
J^\alpha:L^2(0,T;X)\longrightarrow L^2(0,T;X)
\]
is injective; see, e.g., \cite{7}. We therefore define the fractional
derivative of order $\alpha$ by
\[
\partial_t^\alpha:=(J^\alpha)^{-1},
\qquad
\mathcal{D}(\partial_t^\alpha)
=
J^\alpha L^2(0,T;X).
\]
Accordingly, we introduce the Banach space
\[
H_\alpha(0,T;X)
:=
J^\alpha L^2(0,T;X)
\]
equipped with the norm
\[
\|v\|_{H_\alpha(0,T;X)}
:=
\|(J^\alpha)^{-1}v\|_{L^2(0,T;X)}.
\]
If $\frac12<\alpha<1$, then functions belonging to
$H_\alpha(0,T;X)$ are continuous with respect to $t$ and vanish at
$t=0$. In contrast, for $0<\alpha<\frac12$, functions in
$H_\alpha(0,T;X)$ are generally not continuous, and consequently their
value at $t=0$ cannot, in general, be defined; see, for example,
\cite{KubicaRyszewskaYamamoto,Ya21,7}.

Let $A$ be a self-adjoint negative operator in $X$ with domain
$\mathcal{D}(A)$. We consider the initial value problem
\begin{equation}\label{eq:forward}
\partial_t^\alpha u(t)
=
Au(t)+p(t)f,
\qquad
t\in(0,T),
\qquad
u\in H_\alpha(0,T;X),
\end{equation}
where $0<\alpha<1$, $f\in X$ is fixed, and the scalar-valued temporal
factor $p$ is unknown. The inverse problem is to determine the
fractional order $\alpha$ from the scalar observation
\begin{equation}\label{eq:observation}
\mathcal{M}_{p,\alpha}(t)
:=
\langle u_{p,\alpha}(t),\psi\rangle_X,
\qquad
0<t<T,
\end{equation}
where $\psi\in\mathcal{D}(A)$ is fixed and $u_{p,\alpha}$ denotes the
solution of \eqref{eq:forward}.

The main difficulty is that both the fractional order $\alpha$ and the
temporal factor $p$ are unknown. The condition
\[
p(0)=p_0\ne0
\]
plays a crucial role in separating their leading-order effects near
$t=0$. More precisely, for $0\leq\gamma<\alpha$, we prove the uniform
small-time expansion
\begin{equation}\label{eq:intro-expansion}
\partial_t^\gamma\mathcal{M}_{p,\alpha}(t)
=
p_0\langle f,\psi\rangle_X
\frac{t^{\alpha-\gamma}}
{\Gamma(1+\alpha-\gamma)}
+
O(t^{2\alpha-\gamma}),
\qquad
t\downarrow0.
\end{equation}
The remainder in \eqref{eq:intro-expansion} is uniform with respect to
all admissible temporal factors $p$ and fractional orders $\alpha$.
This expansion provides the key ingredient for the stability analysis:
by comparing the leading profiles corresponding to two different
fractional orders and choosing the observation time appropriately in
terms of the data discrepancy, we derive a 
Hölder stability estimate for the fractional order.

A second point concerns the order of differentiation used in the data norm.
If the observation is differentiated to an order $\rho$ larger than every
admissible fractional order, then, as it is shown in Theorem \ref{thm:rigidity}, the resulting quantity is generally not
bounded near $t=0$.  

Several types of overdetermination conditions have been studied for
the recovery of unknown parameters in time-fractional diffusion
equations; see, for example, the survey paper \cite{LiYamamoto1} and
the references therein. In the present work, we consider the scalar
observation
\[
\mathcal{M}_{p,\alpha}(t)
=
\langle u_{p,\alpha}(t),\psi\rangle_X,
\]
which can be interpreted as a weighted average of the solution of the
corresponding forward problem.

Observations of this type have been investigated in several settings.
In \cite{AU2020}, the weight function $\psi$ is chosen to be the first
eigenfunction of the operator $A$. In \cite{PskhuUzmat}, the problem is
considered in the whole space $\mathbb{R}^N$ with
$A=-\Delta$, while $\psi$ is allowed to be an arbitrary harmonic
function satisfying a Tikhonov-type condition.

Of particular relevance to the present work is \cite{AshurovYamamoto},
where the same type of scalar observation is considered with an
arbitrary weight function $\psi\in X$. In that work, the authors
established the uniqueness of both the fractional order $\alpha$ and
the temporal factor $p$ appearing in the source term. The present
paper develops this direction further by establishing a uniform
small-time asymptotic expansion and deriving a quantitative
stability estimate for the fractional order. We also
clarify the fundamentally different behavior of supercritical
fractional derivatives by proving the rigidity result described above.

The paper is organized as follows. Section~\ref{sec:setting} introduces
the operator framework and derives a representation formula.
In Section~\ref{sec:asymptotics}, we prove the uniform small-time
expansion. Section~\ref{sec:stability} contains the 
stability theorem, while Section~\ref{sec:rigidity} is devoted to the
rigidity phenomenon for supercritical derivatives. The uniform
Mittag--Leffler estimate used in the analysis is proved in
Appendix~\ref{sec:appendix}.
\section{Setting and representation formula}
\label{sec:setting}

We impose the following standard spectral assumption.

\medskip
\noindent
\textbf{Assumption on $A$.}
The operator $A\colon\D(A)\subset X\to X$ is densely defined,
self-adjoint and negative, $-A\ge\lambda_1 I$ for some $\lambda_1>0$, and
$A$ has compact resolvent.  Thus
\begin{equation}
 -A=\sum_{n=1}^\infty\lambda_nP_n,
 \qquad 0<\lambda_1<\lambda_2<\cdots,
 \label{eq:spectral}
\end{equation}
where $P_n$ are the mutually orthogonal spectral projections.  Repeated
eigenvalues are included in the ranges of the projections $P_n$.

For $a,b>0$, the two-parameter Mittag--Leffler function is
\begin{equation}
 E_{a,b}(z):=\sum_{k=0}^\infty\frac{z^k}{\Gamma(ak+b)}.
 \label{eq:ML-definition}
\end{equation}
The solution of \eqref{eq:forward} has the spectral representation
\begin{equation}
 u_{p,\alpha}(t)
 =\sum_{n=1}^\infty\int_0^t
 (t-s)^{\alpha-1}E_{\alpha,\alpha}
 \bigl(-\lambda_n(t-s)^\alpha\bigr)p(s)P_nf\dd s,
 \label{eq:mild-solution}
\end{equation}
where the series is convergent in $H_{\alpha}(0, T; X) \cap L^{2}(0, T ; \mathcal{D}(A))$.
This is the standard mild solution of the time-fractional evolution equation;
see, for example, \cite{KubicaRyszewskaYamamoto,7}.

Fix constants
\begin{equation}
 0<\alpha_-<\alpha_+<1,
 \qquad p_0\ne0,
 \qquad M>\abs{p_0},
 \label{eq:parameters}
\end{equation}
and define
\begin{equation}
 \cA:=[\alpha_-,\alpha_+],
 \qquad
 \cP:=\left\{p\in C^1[0,T]:p(0)=p_0,
 \ \norm{p}_{C^1[0,T]}\le M\right\}.
 \label{eq:admissible-sets}
\end{equation}
Throughout the paper we assume
\begin{equation}
 f\in X,\qquad \psi\in\D(A),\qquad
 c_*:=p_0\ip{f}{\psi}\ne0.
 \label{eq:nondegeneracy}
\end{equation}
The last condition is the natural nondegeneracy of the leading small-time
coefficient.

We first record a representation that separates the leading source term from
the contribution of the operator $A$.

\begin{proposition}
\label{prop:representation}
Let $0\le\gamma<\alpha_-$.  For every $p\in\cP$, $\alpha\in\cA$, and
$t\in(0,T)$,
\begin{equation}
 \partial_t^\gamma u_{p,\alpha}(t)
 = (J^{\alpha-\gamma}p)(t)f
 -\int_0^t R_{\alpha,\gamma}(s)p(t-s)\dd s,
 \label{eq:representation}
\end{equation}
where
\begin{equation}
 R_{\alpha,\gamma}(t)
 :=\sum_{n=1}^\infty
 \lambda_n t^{2\alpha-\gamma-1}
 E_{\alpha,2\alpha-\gamma}(-\lambda_nt^\alpha)P_nf.
 \label{eq:R-kernel}
\end{equation}
Moreover, there is a constant $C>0$, independent of $p$, $\alpha$, $\gamma$, and $t$,
such that
\begin{equation}
 \abs{\ip{R_{\alpha,\gamma}(t)}{\psi}}
 \le C t^{2\alpha-\gamma-1}\norm{f}_X\norm{A\psi}_X,
 \qquad 0<t<T.
 \label{eq:R-scalar-bound}
\end{equation}
\end{proposition}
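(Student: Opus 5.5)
Work mode by mode with the spectral representation \eqref{eq:mild-solution}. Fix $n$ and write the $n$-th component of $u_{p,\alpha}$ as $(k_n*p)(t)P_nf$, where
\[
k_n(t):=t^{\alpha-1}E_{\alpha,\alpha}(-\lambda_nt^\alpha).
\]
\textbf{Step 1: a two-term decomposition of $k_n$.} The series definition \eqref{eq:ML-definition} gives the identity
\[
E_{\alpha,\alpha}(z)=\frac{1}{\Gamma(\alpha)}+zE_{\alpha,2\alpha}(z).
\]
Hence
\[
k_n(t)=\frac{t^{\alpha-1}}{\Gamma(\alpha)}-\lambda_nt^{2\alpha-1}E_{\alpha,2\alpha}(-\lambda_nt^\alpha).
\]
Summing over $n$ and using $\sum_nP_nf=f$, we get
\[
u_{p,\alpha}=(J^\alpha p)f-\int_0^t R_{\alpha,0}(s)p(t-s)\dd s.
\]
This is \eqref{eq:representation} for $\gamma=0$.

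\textbf{Step 2: the case $\gamma>0$.} Apply $\partial_t^\gamma=(J^\gamma)^{-1}$. The first term is immediate: since $\gamma<\alpha$, we have $J^\alpha p=J^\gamma J^{\alpha-\gamma}p$, so $\partial_t^\gamma J^\alpha p=J^{\alpha-\gamma}p$.

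For the second term, I would show that the kernel $K_\gamma(t):=t^{2\alpha-\gamma-1}E_{\alpha,2\alpha-\gamma}(-\lambda t^\alpha)$ satisfies $J^\gamma K_\gamma=K_0$. This follows by termwise Riemann--Liouville integration of the power series:
\[
J^\gamma\bigl[t^{\alpha k+2\alpha-\gamma-1}/\Gamma(\alpha k+2\alpha-\gamma)\bigr]=t^{\alpha k+2\alpha-1}/\Gamma(\alpha k+2\alpha).
\]
The interchange of sum and integral is justified because the series converges locally uniformly and $2\alpha-\gamma-1>-1$.

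Convolution commutes with $J^\gamma$, since $J^\gamma$ is itself convolution with $t^{\gamma-1}/\Gamma(\gamma)$. Therefore $K_0*p=J^\gamma(K_\gamma*p)$, and so $\partial_t^\gamma(K_0*p)=K_\gamma*p$. The membership $K_\gamma*p\in L^2$ holds because $K_\gamma$ is integrable. To pass from the individual modes to the full sum, use the closedness of $\partial_t^\gamma$ (it is the inverse of a bounded injective operator). The series for $u$ converges in $H_\alpha\subset H_\gamma$, and the series for $J^{\alpha-\gamma}p\,f-R_{\alpha,\gamma}*p$ converges in $L^2(0,T;X)$. The latter I would verify from the bound in Step 3 below applied to each component, namely $\lambda_n\|K_\gamma\|_{L^1}$ times $\|P_nf\|$. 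This convergence is the one place where some care is needed.

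\textbf{Step 3: the bound \eqref{eq:R-scalar-bound}, the main step.} Using self-adjointness, $\lambda_n(P_nf,\psi)_X=-(P_nf,P_nA\psi)_X$. Hence
\[
\abs{\ip{R_{\alpha,\gamma}(t)}{\psi}}\le t^{2\alpha-\gamma-1}\sup_n\abs{E_{\alpha,2\alpha-\gamma}(-\lambda_nt^\alpha)}\sum_n\norm{P_nf}\norm{P_nA\psi}.
\]
By Cauchy--Schwarz, the last sum is at most $\norm{f}_X\norm{A\psi}_X$.

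It therefore suffices to have a uniform bound $\abs{E_{\alpha,\beta}(-x)}\le C$ for $x\ge0$, uniform over $\alpha\in\cA$ and $\beta=2\alpha-\gamma\in[\alpha,2]$, where $\beta\ge\alpha$ holds because $\gamma<\alpha$. This is exactly the uniform two-parameter Mittag--Leffler estimate of the appendix, including the diagonal case $\beta=\alpha$, and I take it as given. The expected obstacle is precisely this uniformity in $(\alpha,\beta)$: the classical estimate $\abs{E_{\alpha,\beta}(-x)}\le C/(1+x)$ comes with constants that depend on the parameters. I would obtain the uniformity from the Hankel-contour integral representation, whose constants depend continuously on $(\alpha,\beta)$ on the compact set $\cA\times[\alpha_-,2]$. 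Finally, the truncated sums in $n$ are dominated uniformly in $t$ by this bound, so the series may be evaluated termwise.
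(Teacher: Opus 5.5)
Your proof is correct and follows the paper's argument: the identity $E_{\alpha,\alpha}(z)=1/\Gamma(\alpha)+zE_{\alpha,2\alpha}(z)$, then the semigroup property together with the termwise identity $J^\gamma K_\gamma=K_0$ for the kernel, then moving $\lambda_n$ onto $\psi$ and using Cauchy--Schwarz with the uniform bound of Lemma~\ref{lem:ML-uniform}; your justification of passing $\partial_t^\gamma$ through the series is more careful than the paper's. Two minor points: the $L^2(0,T;X)$ convergence of the differentiated series needs the decay bound $C/(1+x)$, not just boundedness, since decay gives $\lambda_n\norm{K_\gamma}_{L^1(0,T)}\le CT^{\alpha-\gamma}/(\alpha-\gamma)$ uniformly in $n$; that convergence is in any case automatic, because $\partial_t^\gamma$ is an isometry of $H_\gamma(0,T;X)$ onto $L^2(0,T;X)$ and $H_\alpha\hookrightarrow H_\gamma$. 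Also, the diagonal case $\beta=\alpha$ never occurs here, since $\gamma<\alpha$ gives $2\alpha-\gamma>\alpha$.
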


A detailed proof of the representation (\ref{eq:representation}) - (\ref{eq:R-kernel}), based on the properties of the Mittag-Leffler function, is contained in the paper \cite{AshurovYamamoto} (by the authors of this article). For the reader's convenience, we present here the main points of the proof. The estimate (\ref{eq:R-scalar-bound}) was also proved in the same paper, but without using Lemma \ref{lem:ML-uniform}, and therefore the constant $C$ in that paper depended on $\alpha$ and $\gamma$.

\begin{proof} We first prove the representation formula. Using the identity
\begin{equation}\label{eq2.4}
E_{\alpha,\alpha}(z)
=
\frac{1}{\Gamma(\alpha)}
+
zE_{\alpha,2\alpha}(z),
\qquad z\in\mathbb C,
\end{equation}
which follows directly from the definition of the Mittag--Leffler
function, we rewrite \eqref{eq:mild-solution} as
\begin{equation}\label{eq2.6}
u_{p,\alpha}(t)
=
(J^\alpha p)(t)f
-
\sum_{n=1}^{\infty}
\lambda_n
\left(
t^{2\alpha-1}
E_{\alpha,2\alpha}(-\lambda_n t^\alpha)*p
\right)(t)P_nf.
\end{equation}

Since $0\leq\gamma<\alpha$, the semigroup property of the
Riemann--Liouville integrals yields
\[
\partial_t^\gamma J^\alpha p
=
(J^\gamma)^{-1}J^\alpha p
=
J^{\alpha-\gamma}p.
\]
Moreover, the standard identity for the fractional integral of the
Mittag--Leffler kernel gives (e.g., formula (1.100) on p. 25 in \cite{Podlubny})
\[
J^\gamma
\left(
t^{2\alpha-\gamma-1}
E_{\alpha,2\alpha-\gamma}(-\lambda_n t^\alpha)
\right)
=
t^{2\alpha-1}
E_{\alpha,2\alpha}(-\lambda_n t^\alpha),
\]
and hence
\[
\partial_t^\gamma
\left(
t^{2\alpha-1}
E_{\alpha,2\alpha}(-\lambda_n t^\alpha)
\right)
=
t^{2\alpha-\gamma-1}
E_{\alpha,2\alpha-\gamma}(-\lambda_n t^\alpha).
\]
Therefore, applying $\partial_t^\gamma$ to \eqref{eq2.6} and using
the corresponding convolution identity, we obtain
\[
\partial_t^\gamma u_{p,\alpha}(t)
=
(J^{\alpha-\gamma}p)(t)f
-
\sum_{n=1}^{\infty}
\lambda_n
\left(
t^{2\alpha-\gamma-1}
E_{\alpha,2\alpha-\gamma}(-\lambda_n t^\alpha)*p
\right)(t)P_nf.
\]
Thus,
\[
\partial_t^\gamma u_{p,\alpha}(t)
=
(J^{\alpha-\gamma}p)(t)f
-
\int_0^t R_{\alpha,\gamma}(s)p(t-s)\,ds,
\]
where
\[
R_{\alpha,\gamma}(t)
=
\sum_{n=1}^{\infty}
\lambda_n t^{2\alpha-\gamma-1}
E_{\alpha,2\alpha-\gamma}(-\lambda_n t^\alpha)P_nf.
\]
This proves \eqref{eq:representation}--\eqref{eq:R-kernel}.

It remains to prove the estimate (\ref{eq:R-scalar-bound}). Taking the scalar product with
$\psi$ and using the definition of $R_{\alpha,\gamma}$, we obtain
\[
\left|
\left(R_{\alpha,\gamma}(t),\psi\right)_X
\right|
\leq
\sum_{n=1}^{\infty}
\lambda_nt^{2\alpha-\gamma-1}
\left|
E_{\alpha,2\alpha-\gamma}
(-\lambda_nt^\alpha)
\right|
\left|
(P_nf,\psi)_X
\right|.
\]
By Lemma \ref{lem:ML-uniform} (see Appendix \ref{sec:appendix}), since
\[
\alpha\in[\alpha_-,\alpha_+],
\qquad
2\alpha-\gamma
\in
[2\alpha_- -\gamma,\,2\alpha_+-\gamma],
\]
there is a constant $C>0$, independent of $\alpha$, $\gamma$, $n$ and $t$, such
that
\[
\left|
E_{\alpha,2\alpha-\gamma}
(-\lambda_nt^\alpha)
\right|
\leq
\frac{C}
{1+\lambda_nt^\alpha}.
\]
Therefore,
\[
\begin{aligned}
\left|
\left(R_{\alpha,\gamma}(t),\psi\right)_X
\right|
&\leq
Ct^{2\alpha-\gamma-1}
\sum_{n=1}^{\infty}
\left|
\left(P_nf,\lambda_nP_n\psi\right)_X
\right|\\
&\leq
Ct^{2\alpha-\gamma-1}
\left(
\sum_{n=1}^{\infty}\|P_nf\|_X^2
\right)^{1/2}
\left(
\sum_{n=1}^{\infty}
\|\lambda_nP_n\psi\|_X^2
\right)^{1/2}\\
&=
Ct^{2\alpha-\gamma-1}
\|f\|_X\|A\psi\|_X.
\end{aligned}
\]
Thus
\[
\left|
\left(R_{\alpha,\gamma}(t),\psi\right)_X
\right|
\leq
Ct^{2\alpha-\gamma-1}
\|f\|_X\|A\psi\|_X,
\qquad 0<t<T.
\]
The constant $C$ is independent of $p$, $\alpha$, $\gamma$, and $t$. This proves
(\ref{eq:R-scalar-bound}) and completes the proof.
\end{proof}

\section{Uniform small-time asymptotics}
\label{sec:asymptotics}

The following expansion is the main analytic input for both the stability and
rigidity results.

\begin{proposition}
\label{prop:small-time}
Fix $0\le\gamma<\alpha_-$.  There exist $t_1\in(0,\min\{1,T\}]$ and $C>0$
such that
\begin{equation}
 \left|
 \partial_t^\gamma\cM_{p,\alpha}(t)
 -c_*\frac{t^{\alpha-\gamma}}
 {\Gamma(1+\alpha-\gamma)}
 \right|
 \le C t^{2\alpha-\gamma}
 \label{eq:uniform-expansion}
\end{equation}
for all $p\in\cP$, $\alpha\in\cA$, and $0<t\le t_1$.
The constant $C$ depends only on the fixed a priori data
\begin{equation*}
 T,\ \alpha_-,\ \alpha_+,\ \gamma,\ M,\ p_0,\ f,\ \psi,
 \quad\text{and }A.
\end{equation*}
\end{proposition}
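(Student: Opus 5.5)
We take the scalar product of the representation \eqref{eq:representation} in Proposition~\ref{prop:representation} with $\psi$. Since $\psi$ is fixed, $\partial_t^\gamma$ commutes with $\langle\cdot,\psi\rangle_X$ because both are linear and bounded at the level of $L^2(0,T;X)$. This gives
\[
 \partial_t^\gamma\cM_{p,\alpha}(t)
 =(J^{\alpha-\gamma}p)(t)\,\ip{f}{\psi}
 -\int_0^t \ip{R_{\alpha,\gamma}(s)}{\psi}\,p(t-s)\dd s .
\]
We then treat the two terms separately. The first term yields the leading profile plus an $O(t^{1+\alpha-\gamma})$ error. The second term is $O(t^{2\alpha-\gamma})$.

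\emph{Leading term.} We write $p(s)=p_0+(p(s)-p_0)$ and use $\abs{p(s)-p_0}\le \norm{p'}_{C[0,T]}s\le Ms$. Since $J^{\alpha-\gamma}1=t^{\alpha-\gamma}/\Gamma(1+\alpha-\gamma)$, we get
\[
 \Bigl|(J^{\alpha-\gamma}p)(t)\ip{f}{\psi}-c_*\frac{t^{\alpha-\gamma}}{\Gamma(1+\alpha-\gamma)}\Bigr|
 \le M\abs{\ip{f}{\psi}}\,\frac{t^{1+\alpha-\gamma}}{\Gamma(2+\alpha-\gamma)} .
\]
Here $\alpha-\gamma\in[\alpha_--\gamma,\alpha_+]\subset(0,1)$, so $1/\Gamma(2+\alpha-\gamma)$ is bounded uniformly. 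Moreover $1+\alpha-\gamma\ge 2\alpha-\gamma$ because $\alpha<1$. Hence for $t\le t_1\le1$ this error is at most $Ct^{2\alpha-\gamma}$. This is the reason for taking $t_1\le\min\{1,T\}$.

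\emph{Remainder.} We use \eqref{eq:R-scalar-bound}, whose constant is uniform in $\alpha,\gamma$ thanks to Lemma~\ref{lem:ML-uniform}, together with $\abs{p}\le M$:
\[
 \Bigl|\int_0^t \ip{R_{\alpha,\gamma}(s)}{\psi}p(t-s)\dd s\Bigr|
 \le CM\norm{f}_X\norm{A\psi}_X\int_0^t s^{2\alpha-\gamma-1}\dd s
 =\frac{CM\norm{f}_X\norm{A\psi}_X}{2\alpha-\gamma}\,t^{2\alpha-\gamma}.
\]
Since $2\alpha-\gamma\ge 2\alpha_--\gamma>\alpha_->0$, the factor $1/(2\alpha-\gamma)$ is bounded by $1/\alpha_-$. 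The integral converges because $2\alpha-\gamma-1>-1$. Combining the two estimates gives \eqref{eq:uniform-expansion} with a constant depending only on the listed data, and $t_1$ can be taken to be $\min\{1,T\}$.

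\emph{Main obstacle.} The substantive step is the uniformity of the Mittag--Leffler bound $\abs{E_{\alpha,2\alpha-\gamma}(-x)}\le C/(1+x)$ over $\alpha\in\cA$. This bound is packaged in Proposition~\ref{prop:representation} via Lemma~\ref{lem:ML-uniform}, so it may be taken as given. The remaining care is a justification point: interchanging $\partial_t^\gamma$ with the scalar product and the pointwise meaning for $t\in(0,t_1]$. Both follow from the continuity of the right-hand side in $t$, which holds since $\alpha-\gamma>0$ and $2\alpha-\gamma>0$.
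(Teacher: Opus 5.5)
Your proposal is correct and follows the paper's proof step for step. You pair \eqref{eq:representation} with $\psi$, bound the leading-term error by $M\abs{\ip{f}{\psi}}t^{1+\alpha-\gamma}/\Gamma(2+\alpha-\gamma)$ using $\abs{p(s)-p_0}\le Ms$, bound the convolution remainder via \eqref{eq:R-scalar-bound}, and absorb $t^{1+\alpha-\gamma}\le t^{2\alpha-\gamma}$ for $t\le1$, which is exactly what the paper does. The only slip is your reason for commuting $\partial_t^\gamma$ with $\ip{\cdot}{\psi}$: $\partial_t^\gamma$ is not bounded on $L^2(0,T;X)$. The commutation holds instead because the Bochner integral $J^\gamma$ commutes with the bounded functional $\ip{\cdot}{\psi}$, and hence so does $(J^\gamma)^{-1}$.
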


\begin{proof}
Taking the scalar product of \eqref{eq:representation} with $\psi$ gives
\begin{equation}
 \partial_t^\gamma\cM_{p,\alpha}(t)
 =\ip{f}{\psi}(J^{\alpha-\gamma}p)(t)
 -\int_0^t\ip{R_{\alpha,\gamma}(s)}{\psi}p(t-s)\dd s.
 \label{eq:scalar-representation}
\end{equation}
Since $p(0)=p_0$ and $\norm{p}_{C^1}\le M$,
\begin{align}
 \left|(J^{\alpha-\gamma}p)(t)
 -p_0\frac{t^{\alpha-\gamma}}
 {\Gamma(1+\alpha-\gamma)}\right|
 &\le \frac{M}{\Gamma(\alpha-\gamma)}
 \int_0^t(t-s)^{\alpha-\gamma-1}s\dd s \notag\\
 &\le C t^{\alpha-\gamma+1}.
 \label{eq:J-expansion}
\end{align}
On the other hand, \eqref{eq:R-scalar-bound} and
$\norm{p}_{C[0,T]}\le M$ imply
\begin{equation}
 \left|\int_0^t\ip{R_{\alpha,\gamma}(s)}{\psi}p(t-s)\dd s\right|
 \le C\int_0^t s^{2\alpha-\gamma-1}\dd s
 \le C t^{2\alpha-\gamma}.
 \label{eq:R-convolution-bound}
\end{equation}
Because $\alpha\le\alpha_+<1$ and $0<t\le1$,
$t^{\alpha-\gamma+1}\le t^{2\alpha-\gamma}$.  Combining
\eqref{eq:scalar-representation}--\eqref{eq:R-convolution-bound} proves
\eqref{eq:uniform-expansion}.
\end{proof}

\begin{remark}
The common value $p(0)=p_0$ is what makes the leading coefficient in
\eqref{eq:uniform-expansion} independent of the unknown function $p$.  If
$p_0=0$ or $\ip{f}{\psi}=0$, the leading term vanishes and a different
nondegeneracy condition, involving a higher-order source jet or a higher
spectral moment, is required.
\end{remark}

\section{Stability for subcritical observations}
\label{sec:stability}

We begin with an elementary inversion estimate for the leading profile.
Let $\Psi=\Gamma'/\Gamma$ denote the digamma function.

\begin{lemma}\label{lem:profile-inversion}
Let \(0<r_-<r_+\) and \(\eta_+>0\). Then there exist constants
\(t_0\in(0,1)\), \(\varepsilon_0>0\), and \(C>0\), depending only on
\(r_-,r_+\), and \(\eta_+\), such that the following assertion holds.

Suppose that
$$
 r\in[r_-,r_+],\qquad
 \eta\in[0,\eta_+],\qquad
 0<t\leq t_0,
$$
and
$$
\left|
\frac{1}{\Gamma(1+r)}
-
\frac{t^\eta}{\Gamma(1+r+\eta)}
\right|
\leq \varepsilon
\leq \varepsilon_0.
$$
Then
$$
\eta\leq
C\frac{\varepsilon}{1+|\log t|}.
$$

\end{lemma}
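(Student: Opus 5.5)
We study the function
\[
g(\eta):=\frac{1}{\Gamma(1+r)}-\frac{t^\eta}{\Gamma(1+r+\eta)},\qquad \eta\in[0,\eta_+],
\]
for fixed $r\in[r_-,r_+]$ and $0<t\le t_0$. Note $g(0)=0$. The plan is to show that $g$ is increasing with a lower bound on its derivative of order $1+|\log t|$ near $\eta=0$, and that it stays bounded below away from $\eta=0$. Then $|g(\eta)|\le\varepsilon$ forces $\eta\lesssim \varepsilon/(1+|\log t|)$.

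\textbf{Step 1: the derivative.}
\[
g'(\eta)=\frac{t^\eta}{\Gamma(1+r+\eta)}\Bigl(|\log t|+\Psi(1+r+\eta)\Bigr),
\]
using $\log t=-|\log t|$ for $t<1$. Since $1+r+\eta\in[1+r_-,1+r_++\eta_+]$, the digamma value is bounded: $|\Psi(1+r+\eta)|\le K$ with $K$ depending only on $r_\pm,\eta_+$. Also $1/\Gamma$ is bounded above and below on this interval, by $\Gamma_\pm$ say. Choose $t_0<1$ with $|\log t_0|\ge 2K+1$. Then for $t\le t_0$,
\[
|\log t|+\Psi\ge \tfrac12|\log t|\ge c(1+|\log t|).
\]
Hence $g$ is increasing with
\[
g'(\eta)\ge c'\,t^\eta(1+|\log t|).
\]

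\textbf{Step 2: lower bound for $g$, two regimes.} This is the main obstacle, because the factor $t^\eta$ degenerates when $\eta|\log t|$ is large. Set $L=|\log t|$.

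\emph{Regime (a): $\eta\le 1/L$.} Here $t^s\ge e^{-1}$ for all $s\le\eta$, so integrating the derivative bound gives
\[
g(\eta)\ge c'e^{-1}(1+L)\,\eta .
\]

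\emph{Regime (b): $\eta>1/L$.} Since $g$ is increasing,
\[
g(\eta)\ge g(1/L)\ge c'e^{-1}(1+L)/L\ge c'e^{-1}=:\delta_0>0,
\]
where $\delta_0$ depends only on $r_\pm,\eta_+$. (Alternatively, $t^\eta\le e^{-1}$ in this regime, so $g\ge \Gamma_-^{\,\prime}-e^{-1}\Gamma_+^{\,\prime}$-type bounds would also work, but that route needs care because $\Gamma(1+r+\eta)$ can be smaller than $\Gamma(1+r)$. Monotonicity avoids this issue.)

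\textbf{Step 3: conclusion.} Take $\varepsilon_0<\delta_0$. Then $|g(\eta)|\le\varepsilon\le\varepsilon_0$ rules out regime (b), since $g\ge0$ there and $g\ge\delta_0>\varepsilon_0$. So regime (a) holds, and
\[
\eta\le \frac{e}{c'}\cdot\frac{\varepsilon}{1+L}.
\]

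All constants depend only on $r_\pm$ and $\eta_+$, through $K$, $c'$ and $t_0$. This gives the claimed estimate with $C=e/c'$.
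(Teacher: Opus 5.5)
Your proof is correct and follows the paper's route: a digamma bound makes $s\mapsto t^s/\Gamma(1+r+s)$ strictly monotone for $t\le t_0$, with derivative bounded below by a multiple of $(1+|\log t|)$ times the profile, and integrating over $[0,\eta]$ gives the estimate. The only difference is how you stop the factor $t^s$ from degenerating. You split at $\eta=1/|\log t|$ and exclude the large regime by monotonicity. The paper instead takes $\varepsilon_0\le\frac12\min_{r\in[r_-,r_+]}1/\Gamma(1+r)$, so that $t^\eta/\Gamma(1+r+\eta)\ge 1/\Gamma(1+r)-\varepsilon$ together with monotonicity gives $t^s/\Gamma(1+r+s)\ge\frac12\min_r 1/\Gamma(1+r)$ on all of $[0,\eta]$ at once, with no case split.
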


\begin{proof}
For fixed \(t\in(0,1)\) and \(r\in[r_-,r_+]\), define
$$
H_{r,t}(s):=
\frac{t^s}{\Gamma(1+r+s)},
\qquad 0\leq s\leq\eta_+.
$$
Then
$$
H_{r,t}(0)=\frac{1}{\Gamma(1+r)},
\qquad
H_{r,t}(\eta)
=
\frac{t^\eta}{\Gamma(1+r+\eta)}.
$$
Consequently, the assumption of the lemma can be written as
$$
|H_{r,t}(0)-H_{r,t}(\eta)|\leq\varepsilon.
$$
We first establish a uniform lower bound for \(H_{r,t}\) on the
interval \([0,\eta]\), provided \(\varepsilon\) is sufficiently small.

Since
$$
1+r+s\in[1+r_-,1+r_++\eta_+],
$$
and \(\Psi\) is continuous on this compact interval, there exists a
constant \(L>0\) such that
$$
|\Psi(1+r+s)|\leq L
$$
for all
$$
r\in[r_-,r_+],
\qquad
s\in[0,\eta_+].
$$
Differentiating \(H_{r,t}\) with respect to \(s\), we obtain
$$
H_{r,t}'(s)
=
H_{r,t}(s)
\left(
\log t-\Psi(1+r+s)
\right).
$$
Since \(0<t<1\), we have \(\log t<0\). Choose \(t_0\in(0,1)\)
so small that
$$
|\log t|\geq 2L+2,
\qquad 0<t\leq t_0.
$$
It follows that
$$
\log t-\Psi(1+r+s)
\leq
-|\log t|+L
\leq
-\frac12|\log t|-1
<0.
$$
Hence
$$
H_{r,t}'(s)<0,
\qquad
0\leq s\leq\eta_+,
$$
and therefore \(H_{r,t}\) is strictly decreasing on
\([0,\eta_+]\).
Next, since
$$
r\in[r_-,r_+],
$$
the continuity and positivity of the Gamma function imply
$$
m_0:=
\min_{r\in[r_-,r_+]}
\frac{1}{\Gamma(1+r)}
>0.
$$
Choose
$$
\varepsilon_0\leq\frac{m_0}{2}.
$$
From
$$
|H_{r,t}(0)-H_{r,t}(\eta)|\leq\varepsilon
\leq\varepsilon_0
$$
we obtain
$$
H_{r,t}(\eta)
\geq
H_{r,t}(0)-\varepsilon_0
\geq
m_0-\frac{m_0}{2}
=
\frac{m_0}{2}.
$$
Since \(H_{r,t}\) is decreasing, this yields
$$
H_{r,t}(s)\geq\frac{m_0}{2},
\qquad
0\leq s\leq\eta.
$$
We can therefore estimate the derivative from below. Indeed,
$$
-H_{r,t}'(s)
=
H_{r,t}(s)
\left(
|\log t|+\Psi(1+r+s)
\right),
$$
where we have used
$$
-\log t=|\log t|.
$$
Since
$$
\Psi(1+r+s)\geq-L,
$$
we have
$$
|\log t|+\Psi(1+r+s)
\geq
|\log t|-L.
$$
Thus
$$
-H_{r,t}'(s)
\geq
\frac{m_0}{2}
\left(
|\log t|-L
\right).
$$
By the choice of \(t_0\), we have
$$
|\log t|-L
\geq
\frac12(1+|\log t|),
\qquad
0<t\leq t_0.
$$
Consequently, there exists a constant \(c>0\), depending only on
\(r_-,r_+\), and \(\eta_+\), such that
$$
-H_{r,t}'(s)
\geq
c(1+|\log t|),
\qquad
0\leq s\leq\eta.
$$
Integrating this inequality over \([0,\eta]\), we obtain
$$
H_{r,t}(0)-H_{r,t}(\eta)
=
-\int_0^\eta H_{r,t}'(s)\,ds
\geq
c\eta(1+|\log t|).
$$
On the other hand, by assumption,
$$
H_{r,t}(0)-H_{r,t}(\eta)
\leq
|H_{r,t}(0)-H_{r,t}(\eta)|
\leq\varepsilon.
$$
Hence
$$
c\eta(1+|\log t|)
\leq\varepsilon,
$$
and therefore
$$
\eta
\leq
\frac{1}{c}
\frac{\varepsilon}{1+|\log t|}.
$$
This proves the desired estimate.
\end{proof}

We now state the main stability theorem.  For $0\le\gamma<\alpha_-$, define
\begin{equation}
 D_\gamma(p,\alpha;q,\beta)
 :=\norm{\partial_t^\gamma\cM_{p,\alpha}
 -\partial_t^\gamma\cM_{q,\beta}}_{L^\infty(0,T)}.
 \label{eq:data-discrepancy}
\end{equation}
The derivatives in \eqref{eq:data-discrepancy} are bounded because
$\gamma$ lies strictly below every admissible fractional order.

\begin{theorem}
\label{thm:stability}
Fix $0\le\gamma<\alpha_-$ and set
\begin{equation}
 \theta_\gamma:=\frac{\alpha_+}{2\alpha_+-\gamma}\in\left[\frac12,1\right).
 \label{eq:theta}
\end{equation}
There exist constants $D_*>0$ and $C>0$ such that, for all
$p,q\in\cP$ and $\alpha,\beta\in\cA$, the following holds.  If
\begin{equation*}
 0<D_\gamma:=D_\gamma(p,\alpha;q,\beta)\le D_*,
\end{equation*}
then
\begin{equation}\label{eq:main-stability}
 \abs{\alpha-\beta}
 \le C\frac{D_\gamma^{\theta_\gamma}}
 {1+\abs{\log D_\gamma}}.
\end{equation}
If $D_\gamma=0$, then $\alpha=\beta$.
\end{theorem}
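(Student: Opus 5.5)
Without loss of generality we take $\beta\le\alpha$ and set $\eta:=\alpha-\beta\ge0$, $r:=\beta-\gamma$. Then $r\in[r_-,r_+]:=[\alpha_--\gamma,\alpha_+-\gamma]$ and $\eta\in[0,\eta_+]:=[0,\alpha_+-\alpha_-]$. We apply Proposition~\ref{prop:small-time} to both pairs $(p,\alpha)$ and $(q,\beta)$ and subtract. For $0<t\le t_1$ this gives
\[
 \abs{c_*}\,\Bigl|\frac{t^{\alpha-\gamma}}{\Gamma(1+\alpha-\gamma)}-\frac{t^{\beta-\gamma}}{\Gamma(1+\beta-\gamma)}\Bigr|
 \le D_\gamma+2Ct^{2\beta-\gamma}.
\]
Dividing by $\abs{c_*}t^{r}$ yields
\[
 \Bigl|\frac{1}{\Gamma(1+r)}-\frac{t^\eta}{\Gamma(1+r+\eta)}\Bigr|
 \le \varepsilon(t):=C'\bigl(D_\gamma t^{-(\beta-\gamma)}+t^{\beta}\bigr).
\]
This is exactly the hypothesis of Lemma~\ref{lem:profile-inversion}, provided $t\le\min\{t_0,t_1\}$ and $\varepsilon(t)\le\varepsilon_0$. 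In that case we obtain $\eta\le C\varepsilon(t)/(1+\abs{\log t})$.

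Next we choose $t$ to balance the two terms in $\varepsilon(t)$. The exponents depend on the unknown $\beta$, so we cannot use them directly. Instead we fix $t:=D_\gamma^{1/(2\alpha_+-\gamma)}$, which is independent of the unknowns; we do not optimize per $\beta$. For $D_*$ small this $t$ lies below $\min\{t_0,t_1\}$, and $\abs{\log t}$ is comparable to $\abs{\log D_\gamma}$, with the ratio $1/(2\alpha_+-\gamma)$ bounded above and below. Then we bound the two terms. Since $t<1$ and $\beta\le\alpha_+$, we have $t^\beta\ge t^{\alpha_+}=D_\gamma^{\theta_\gamma}$, which is the wrong direction for an upper bound. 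This step needs care: the bound $t^\beta$ is only as good as $t^{\alpha_-}$ in the worst case.

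This is the main obstacle: making the estimate uniform over $\beta\in\cA$ while recovering the sharp exponent $\theta_\gamma$. I would first try choosing $t=D_\gamma^{1/(2\beta-\gamma)}$, which depends on the unknown but is legitimate since $\beta$ is a fixed number in the argument. This balances the terms to $\varepsilon\approx 2C'D_\gamma^{\beta/(2\beta-\gamma)}$. The function $\beta\mapsto\beta/(2\beta-\gamma)$ is decreasing, because its derivative is $-\gamma/(2\beta-\gamma)^2\le0$. Hence $\beta/(2\beta-\gamma)\ge\alpha_+/(2\alpha_+-\gamma)=\theta_\gamma$, and since $D_\gamma<1$ we get $D_\gamma^{\beta/(2\beta-\gamma)}\le D_\gamma^{\theta_\gamma}$. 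Also $\abs{\log t}=\abs{\log D_\gamma}/(2\beta-\gamma)\ge\abs{\log D_\gamma}/2$. We need $t\le\min\{t_0,t_1\}$ and $\varepsilon\le\varepsilon_0$ uniformly in $\beta$. Both follow from $D_\gamma\le D_*$ with $D_*$ small, because $1/(2\beta-\gamma)\ge 1/2$ keeps $t\le D_*^{1/2}$. Combining these gives \eqref{eq:main-stability}.

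Finally, suppose $D_\gamma=0$. Then the inequality above holds with $\varepsilon(t)=C't^\beta$ for every small $t$. Lemma~\ref{lem:profile-inversion} then gives $\eta\le Ct^\beta/(1+\abs{\log t})\to0$ as $t\downarrow0$, so $\alpha=\beta$.
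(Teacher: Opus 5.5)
Your proposal is correct and follows the paper's proof essentially step for step. It uses the same reduction to Lemma~\ref{lem:profile-inversion}, the same $\beta$-dependent balancing choice $t=D_\gamma^{1/(2\beta-\gamma)}$ (your discarded fixed choice $t=D_\gamma^{1/(2\alpha_+-\gamma)}$ is rightly abandoned), and the same monotonicity of $\beta\mapsto\beta/(2\beta-\gamma)$. The only cosmetic difference is the case $D_\gamma=0$: you reapply the lemma with $\varepsilon(t)=C't^\beta$ and let $t\downarrow0$, whereas the paper passes to the limit directly in the key inequality to obtain the contradiction $1/\Gamma(1+r)=0$.
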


\begin{proof}
By symmetry with respect to the pairs \((p,\alpha)\) and
\((q,\beta)\), it is sufficient to consider the case
$$
\alpha\geq\beta.
$$
Set
$$
\eta:=\alpha-\beta\geq0,
\qquad
r:=\beta-\gamma.
$$
Since
$$
\beta\in[\alpha_-,\alpha_+]
\qquad\text{and}\qquad
0\leq\gamma<\alpha_-,
$$
we have
$$
r\in[\alpha_--\gamma,\alpha_+-\gamma].
$$
In particular, the parameter \(r\) varies in a compact interval
strictly contained in \((0,\infty)\). Moreover,
$$
0\leq\eta=\alpha-\beta\leq\alpha_+-\alpha_-.
$$
Hence the parameters \(r\) and \(\eta\) satisfy the assumptions of
Lemma~\ref{lem:profile-inversion} with
$$
r_-=\alpha_--\gamma,\qquad
r_+=\alpha_+-\gamma,\qquad
\eta_+=\alpha_+-\alpha_-.
$$

By Proposition~\ref{prop:small-time}, there exist \(t_1\in(0,\min\{1,T\}]\) and
\(C>0\), depending only on the fixed a priori data, such that
$$
\left|
\partial_t^\gamma \mathcal{M}_{p,\alpha}(t)
-
c_*
\frac{t^{\alpha-\gamma}}
{\Gamma(1+\alpha-\gamma)}
\right|
\leq
Ct^{2\alpha-\gamma},
\qquad
0<t\leq t_1,
$$
and, similarly,
$$
\left|
\partial_t^\gamma \mathcal{M}_{q,\beta}(t)
-
c_*
\frac{t^{\beta-\gamma}}
{\Gamma(1+\beta-\gamma)}
\right|
\leq
Ct^{2\beta-\gamma}.
$$
The same constant \(C\) may be used for both estimates because the
remainder estimate in Proposition~\ref{prop:small-time} is uniform with respect to all
admissible \(p,q\in\mathcal P\) and
\(\alpha,\beta\in\mathcal A\).

Subtracting the two asymptotic formulas and using the triangle
inequality, we obtain
$$
\begin{aligned}
|c_*|
\left|
\frac{t^{\beta-\gamma}}
{\Gamma(1+\beta-\gamma)}
-
\frac{t^{\alpha-\gamma}}
{\Gamma(1+\alpha-\gamma)}
\right|
&\leq
\left|
\partial_t^\gamma \mathcal{M}_{p,\alpha}(t)
-
\partial_t^\gamma \mathcal{M}_{q,\beta}(t)
\right|
\\
&\quad
+Ct^{2\beta-\gamma}
+Ct^{2\alpha-\gamma}.
\end{aligned}
$$

By the definition of \(D_\gamma\),
$$
\left|
\partial_t^\gamma \mathcal{M}_{p,\alpha}(t)
-
\partial_t^\gamma \mathcal{M}_{q,\beta}(t)
\right|
\leq D_\gamma,
$$
and hence
$$
|c_*|
\left|
\frac{t^{\beta-\gamma}}
{\Gamma(1+\beta-\gamma)}
-
\frac{t^{\alpha-\gamma}}
{\Gamma(1+\alpha-\gamma)}
\right|
\leq
D_\gamma
+Ct^{2\beta-\gamma}
+Ct^{2\alpha-\gamma}.
$$
Since \(\alpha\geq\beta\) and \(0<t\leq1\), then
$
t^{2\alpha-\gamma}
\leq
t^{2\beta-\gamma}.
$
Therefore,
$$
|c_*|
\left|
\frac{t^{\beta-\gamma}}
{\Gamma(1+\beta-\gamma)}
-
\frac{t^{\alpha-\gamma}}
{\Gamma(1+\alpha-\gamma)}
\right|
\leq
D_\gamma
+Ct^{2\beta-\gamma}.
$$
After enlarging \(C\), if necessary, we can write
$$
\left|
\frac{t^{\beta-\gamma}}
{\Gamma(1+\beta-\gamma)}
-
\frac{t^{\alpha-\gamma}}
{\Gamma(1+\alpha-\gamma)}
\right|
\leq
C\left(
D_\gamma+t^{2\beta-\gamma}
\right).
$$

We now factor out \(t^{\beta-\gamma}\). Since
$$
\alpha-\gamma
=
(\beta-\gamma)+(\alpha-\beta)
=
r+\eta,
$$
we have
$$
t^{\alpha-\gamma}
=
t^{\beta-\gamma}t^\eta.
$$
Consequently,
$$
t^{\beta-\gamma}
\left|
\frac{1}{\Gamma(1+r)}
-
\frac{t^\eta}{\Gamma(1+r+\eta)}
\right|
\leq
C\left(
D_\gamma+t^{2\beta-\gamma}
\right).
$$
Dividing by \(t^{\beta-\gamma}=t^r\), we obtain
\begin{equation}\label{eq:key-stability-inequality}
\left|
\frac{1}{\Gamma(1+r)}
-
\frac{t^\eta}{\Gamma(1+r+\eta)}
\right|
\leq
C\left(
D_\gamma t^{-r}+t^\beta
\right).
\end{equation}

We now choose \(t\) in such a way that the two terms on the
right-hand side have the same order. Let
\begin{equation}\label{eq:choice-t}
    t:=D_\gamma^{1/(2\beta-\gamma)}.
\end{equation}
Since
$$
2\beta-\gamma
\geq
2\alpha_- -\gamma>0,
$$
this is well defined.

Assume first that
$$
0<D_\gamma\leq D_*,
$$
where \(D_*>0\) will be chosen sufficiently small below. Then
\(t\to0\) uniformly with respect to
\(\beta\in[\alpha_-,\alpha_+]\) as \(D_\gamma\to0\).
Indeed,
$$
t
=
D_\gamma^{1/(2\beta-\gamma)}
\leq
D_\gamma^{1/(2\alpha_+-\gamma)}
$$
for \(0<D_\gamma<1\). Hence, by reducing \(D_*\), if necessary, we
may guarantee
$$
0<t\leq\min\{t_0,t_1\},
$$
where \(t_0\) is the constant appearing in Lemma~\ref{lem:profile-inversion} and \(t_1\)
comes from Proposition~\ref{prop:small-time}.

With the choice (\ref{eq:choice-t}), we have
$$
t^{2\beta-\gamma}=D_\gamma.
$$
Since
$$
r=\beta-\gamma,
$$
it follows that
$$
D_\gamma t^{-r}
=
D_\gamma
D_\gamma^{-(\beta-\gamma)/(2\beta-\gamma)}
=
D_\gamma^{\beta/(2\beta-\gamma)}.
$$

On the other hand,
$$
t^\beta
=
D_\gamma^{\beta/(2\beta-\gamma)}.
$$
Thus the two terms are exactly equal:
$$
D_\gamma t^{-r}
=
t^\beta
=
D_\gamma^{\beta/(2\beta-\gamma)}.
$$
Therefore, from (\ref{eq:key-stability-inequality}),
\begin{equation}\label{Dbeta}
    \left|
\frac{1}{\Gamma(1+r)}
-
\frac{t^\eta}{\Gamma(1+r+\eta)}
\right|
\leq
C
D_\gamma^{\beta/(2\beta-\gamma)}.
\end{equation}

We next verify that Lemma~\ref{lem:profile-inversion} can indeed be applied. Since
$$
\frac{\beta}{2\beta-\gamma}>0
$$
uniformly for
\(\beta\in[\alpha_-,\alpha_+]\), the right-hand side of
(\ref{Dbeta}) tends to zero uniformly as \(D_\gamma\to0\).
Thus, after decreasing \(D_*\) once more, we may assume that
$$
C D_\gamma^{\beta/(2\beta-\gamma)}
\leq\varepsilon_0,
$$
where \(\varepsilon_0\) is the constant of Lemma~\ref{lem:profile-inversion}.

Applying Lemma~\ref{lem:profile-inversion} with
$$
\varepsilon
=
C D_\gamma^{\beta/(2\beta-\gamma)},
$$
we obtain
\begin{equation}\label{eta1}
\eta
\leq
C
\frac{
D_\gamma^{\beta/(2\beta-\gamma)}
}{
1+|\log t|
}.
\end{equation}

Recall that \(\eta=\alpha-\beta\), and therefore
$$
|\alpha-\beta|
=
\eta.
$$

It remains to replace the exponent
$$
\frac{\beta}{2\beta-\gamma}
$$
by the uniform exponent \(\theta_\gamma\).
Consider the function
$$
F(s):=\frac{s}{2s-\gamma},
\qquad s>\frac{\gamma}{2}
$$
(note, in our case even $s>\gamma$). Its derivative is
$$
F'(s)
=
\frac{(2s-\gamma)-2s}{(2s-\gamma)^2}
=
-\frac{\gamma}{(2s-\gamma)^2}
\leq0.
$$

Thus \(F\) is nonincreasing. Since
$$
\beta\leq\alpha_+,
$$
we obtain
$$
\frac{\beta}{2\beta-\gamma}
\geq
\frac{\alpha_+}{2\alpha_+-\gamma}
=
\theta_\gamma.
$$

For \(0<D_\gamma<1\), a larger exponent gives a smaller power, and
hence
$$
D_\gamma^{\beta/(2\beta-\gamma)}
\leq
D_\gamma^{\theta_\gamma}.
$$

Furthermore, by the definition of \(t\),
$$
|\log t|
=
\frac{|\log D_\gamma|}{2\beta-\gamma}.
$$
Since
$$
2\alpha_- -\gamma
\leq
2\beta-\gamma
\leq
2\alpha_+-\gamma,
$$
there exist constants \(c_1,c_2>0\), independent of \(\beta\), such
that
$$
c_1(1+|\log D_\gamma|)
\leq
1+|\log t|
\leq
c_2(1+|\log D_\gamma|).
$$

In particular,
$$
\frac{1}{1+|\log t|}
\leq
\frac{C}{1+|\log D_\gamma|}.
$$

Combining this estimate with (\ref{eta1}), we conclude that
$$
|\alpha-\beta|
\leq
C
\frac{
D_\gamma^{\theta_\gamma}
}{
1+|\log D_\gamma|
},
$$
which proves (\ref{eq:main-stability}).

It remains to consider the case \(D_\gamma=0\).
Assume, to the contrary, that
$$
\eta=\alpha-\beta>0.
$$
Then (\ref{eq:key-stability-inequality}) gives
$$
\left|
\frac{1}{\Gamma(1+r)}
-
\frac{t^\eta}{\Gamma(1+r+\eta)}
\right|
\leq
Ct^\beta,
\qquad
0<t\leq t_1.
$$

Since \(\eta>0\) and \(\beta>0\),
$$
t^\eta\to0,
\qquad
t^\beta\to0
\qquad
\text{as }t\downarrow0.
$$
Passing to the limit \(t\downarrow0\), we obtain
$$
\frac{1}{\Gamma(1+r)}=0.
$$

This is impossible because \(r>0\), and hence
$$
\Gamma(1+r)>0.
$$

Therefore \(\eta=0\), so that
$$
\alpha=\beta.
$$

This completes the proof.
\end{proof}

\begin{corollary}
\label{cor:gamma-zero}
For
\begin{equation*}
 D_0:=\norm{\cM_{p,\alpha}-\cM_{q,\beta}}_{L^\infty(0,T)},
\end{equation*}
there exist $D_*>0$ and $C>0$ such that
\begin{equation}
 \abs{\alpha-\beta}
 \le C\frac{D_0^{1/2}}{1+\abs{\log D_0}}
 \label{eq:gamma-zero-stability}
\end{equation}
whenever $0<D_0\le D_*$.  In particular, the order is uniquely determined
within the admissible class by the scalar observation \eqref{eq:observation},
even though the temporal factor is also unknown.
\end{corollary}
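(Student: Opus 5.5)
The corollary is the special case $\gamma=0$ of Theorem~\ref{thm:stability}, so the plan is to specialize rather than reprove. First I check that $\gamma=0$ is admissible: the theorem requires $0\le\gamma<\alpha_-$, and $\alpha_->0$ by \eqref{eq:parameters}. Next I identify the data discrepancy. For $\gamma=0$ the operator $\partial_t^0=(J^0)^{-1}$ is the identity, with $J^0$ understood as the identity. Hence $\partial_t^0\cM_{p,\alpha}=\cM_{p,\alpha}$, and $D_0(p,\alpha;q,\beta)$ from \eqref{eq:data-discrepancy} coincides with the quantity $D_0$ in the corollary.

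I then compute the exponent from \eqref{eq:theta}: $\theta_0=\alpha_+/(2\alpha_+)=1/2$, independently of $\alpha_+$. This is consistent with the proof of the theorem, where the intermediate exponent $\beta/(2\beta-\gamma)$ equals exactly $1/2$ for every $\beta$ when $\gamma=0$. In that case no monotonicity argument for $F(s)$ is needed. Applying Theorem~\ref{thm:stability} with the constants $D_*$ and $C$ it provides then gives \eqref{eq:gamma-zero-stability} for $0<D_0\le D_*$.

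For the uniqueness statement, suppose $\cM_{p,\alpha}=\cM_{q,\beta}$ on $(0,T)$. Then $D_0=0$, and the final clause of Theorem~\ref{thm:stability} yields $\alpha=\beta$. This holds for arbitrary $p,q\in\cP$, which is why the unknown temporal factor does not obstruct identification of the order.

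There is no genuine obstacle. The only point requiring care is the convention $\partial_t^0=\mathrm{id}$. The paper's derivation of Proposition~\ref{prop:representation} for $\gamma=0$ reduces to \eqref{eq2.6}, since $E_{\alpha,2\alpha-0}=E_{\alpha,2\alpha}$. Consequently every earlier estimate used in the theorem remains valid at $\gamma=0$.
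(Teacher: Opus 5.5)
Your proposal is correct and follows the paper's own route. The corollary is Theorem~\ref{thm:stability} specialized to $\gamma=0$, where $\theta_0=\alpha_+/(2\alpha_+)=1/2$ and $D_0$ is exactly the discrepancy \eqref{eq:data-discrepancy}; the uniqueness claim is the theorem's $D_\gamma=0$ clause.
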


\begin{remark}
The logarithmic denominator in \eqref{eq:main-stability} comes from the
sensitivity of $t^\alpha$ to the exponent:
$\partial_\alpha t^\alpha=t^\alpha\log t$.  Dropping this denominator gives
the weaker but simpler H\"older estimate
$\abs{\alpha-\beta}\le C D_\gamma^{\theta_\gamma}$.
\end{remark}

\section{Supercritical derivatives and rigidity}
\label{sec:rigidity}

The stability norm in Theorem~\ref{thm:stability} uses
$\gamma<\alpha_-$.  We now explain why a derivative order larger than all
admissible orders behaves very differently. Note that in case $X= \mathbb{R}$ we will additionally introduce the notation $H_{\alpha}(0,T) := H_{\alpha}(0,T;\mathbb{R})$.

\begin{theorem}\label{thm:rigidity} Let $\rho\in(\alpha_+,1)$, $p,q\in\mathcal P$, and
$\alpha,\beta\in\mathcal A$. Set
\[
w(t):=\mathcal{M}_{p,\alpha}(t)-\mathcal{M}_{q,\beta}(t).
\]
If
\[
w\in H_\rho(0,T)
\quad\text{and}\quad
\partial_t^\rho w\in L^\infty(0,T),
\]
then
\[
\alpha=\beta.
\]
Equivalently, if $\alpha\ne\beta$, then
\[
\partial_t^\rho
\bigl(\mathcal{M}_{p,\alpha}-\mathcal{M}_{q,\beta}\bigr)
\notin L^\infty(0,T).
\]
In particular,
\[
\left\|
\partial_t^\rho
\bigl(\mathcal{M}_{p,\alpha}-\mathcal{M}_{q,\beta}\bigr)
\right\|_{L^\infty(0,T)}
=\infty
\]
whenever $\alpha\ne\beta$.
\end{theorem}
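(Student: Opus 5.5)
Argue by contradiction: assume $\alpha\ne\beta$, say $\alpha>\beta$ after relabelling (the roles of $(p,\alpha)$ and $(q,\beta)$ are symmetric), and that $w\in H_\rho(0,T)$ with $g:=\partial_t^\rho w\in L^\infty(0,T)$. Since $w=J^\rho g$, we get the pointwise bound
\[
|w(t)|\le \norm{g}_{L^\infty}\frac{t^\rho}{\Gamma(1+\rho)},\qquad 0<t<T .
\]
So $w$ vanishes at $t=0$ at least like $t^\rho$. Because $\rho>\alpha_+\ge\max\{\alpha,\beta\}$, this decay is strictly faster than any admissible leading profile. The whole proof then amounts to contradicting this upper bound with a lower bound on $|w(t)|$ for small $t$, obtained from Proposition~\ref{prop:small-time} with $\gamma=0$.

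Applying Proposition~\ref{prop:small-time} with $\gamma=0$ to both $\cM_{p,\alpha}$ and $\cM_{q,\beta}$ and subtracting gives, for $0<t\le t_1$,
\[
\Bigl|w(t)-c_*\Bigl(\frac{t^{\alpha}}{\Gamma(1+\alpha)}-\frac{t^{\beta}}{\Gamma(1+\beta)}\Bigr)\Bigr|\le C\bigl(t^{2\alpha}+t^{2\beta}\bigr)\le 2Ct^{2\beta}.
\]
Since $\alpha>\beta$ and $c_*\ne0$, the bracket equals $-t^\beta/\Gamma(1+\beta)\,(1+o(1))$ as $t\downarrow0$. We also have $2\beta>\beta$ and $\alpha>\beta$. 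Hence
\[
|w(t)|\ge \frac{|c_*|}{2\Gamma(1+\beta)}\,t^\beta
\]
for all sufficiently small $t>0$.

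Combining the two bounds yields $t^{\beta}\le C' t^{\rho}$, that is, $1\le C't^{\rho-\beta}$, for all small $t$. Because $\rho>\alpha_+\ge\beta$, the right-hand side tends to $0$ as $t\downarrow0$, which is a contradiction. Hence $\alpha=\beta$. The remaining formulations in Theorem~\ref{thm:rigidity} are the contrapositive. If $w\notin H_\rho(0,T)$, we interpret $\norm{\partial_t^\rho w}_{L^\infty}=\infty$ by convention; otherwise the derivative exists and is unbounded by the above.

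The main obstacle is modest: we must be sure the representation $w=J^\rho g$ holds pointwise, and not merely in $L^2$, so that the bound $|w(t)|\lesssim t^\rho$ is valid for every small $t$. Here $\rho>1/2$ is not guaranteed, but $g\in L^\infty$ makes $J^\rho g$ continuous. Moreover, $w$ itself is continuous on $(0,T)$ by the small-time expansion and the mild-solution formula. So the identity $w=J^\rho g$, valid a.e., holds everywhere on $(0,T)$, and this is enough to compare with the pointwise lower bound.
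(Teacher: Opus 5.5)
Your proposal is correct and follows essentially the same argument as the paper. You play the lower bound $|w(t)|\ge c\,t^{\min\{\alpha,\beta\}}$ from Proposition~\ref{prop:small-time} with $\gamma=0$ against the upper bound $|w(t)|\le \norm{g}_{L^\infty(0,T)}\,t^\rho/\Gamma(1+\rho)$ from $w=J^\rho g$; the only difference is that you upgrade the a.e.\ identity $w=J^\rho g$ to an everywhere identity via continuity, whereas the paper lets $t\downarrow0$ along a sequence of points where both bounds hold.
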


\begin{proof}
We argue by contradiction. Suppose that
\[
\alpha\ne\beta.
\]
Without loss of generality, we may assume that
\[
\alpha<\beta.
\]

By Proposition~\ref{prop:small-time} with $\gamma=0$, uniformly with respect to
$p,q\in\mathcal P$ and $\alpha,\beta\in\mathcal A$, we have
\[
\mathcal{M}_{p,\alpha}(t)
=
c_*
\frac{t^\alpha}{\Gamma(1+\alpha)}
+
O(t^{2\alpha}),
\qquad t\downarrow0,
\]
and
\[
\mathcal{M}_{q,\beta}(t)
=
c_*
\frac{t^\beta}{\Gamma(1+\beta)}
+
O(t^{2\beta}),
\qquad t\downarrow0,
\]
where
\[
c_*=p_0(f,\psi)_X\ne0.
\]
Consequently,
\[
\begin{aligned}
w(t)
&=
\mathcal{M}_{p,\alpha}(t)-\mathcal{M}_{q,\beta}(t)
\\
&=
c_*
\frac{t^\alpha}{\Gamma(1+\alpha)}
-
c_*
\frac{t^\beta}{\Gamma(1+\beta)}
+
O(t^{2\alpha})
+
O(t^{2\beta}).
\end{aligned}
\]
Since $\alpha<\beta$, we have
\[
t^\beta=o(t^\alpha),
\qquad
t^{2\alpha}=o(t^\alpha),
\qquad
t^{2\beta}=o(t^\alpha)
\]
as $t\downarrow0$. Hence
\[
w(t)
=
c_*
\frac{t^\alpha}{\Gamma(1+\alpha)}
+
o(t^\alpha),
\qquad t\downarrow0.
\]
Therefore,
\[
\lim_{t\downarrow0}
\frac{w(t)}{t^\alpha}
=
\frac{c_*}{\Gamma(1+\alpha)}
\ne0.
\]
It follows that there exist constants $c>0$ and
$t_2\in(0,T)$ such that
\begin{equation}\label{w-from below}
|w(t)|\geq ct^\alpha,
\qquad 0<t<t_2.
\end{equation}

On the other hand, by assumption,
\[
w\in H_\rho(0,T)
\]
and
\[
g:=\partial_t^\rho w\in L^\infty(0,T).
\]
By the definition of the fractional derivative introduced in
Section~1,
\(
\partial_t^\rho=(J^\rho)^{-1}
\)
with
\(
D(\partial_t^\rho)=H_\rho(0,T).
\)
Hence
\(
w=J^\rho g \in H_\rho(0,T).
\)
Therefore,
\[
w(t)
=
\frac{1}{\Gamma(\rho)}
\int_0^t
(t-s)^{\rho-1}g(s)\,ds
\]
for almost every $t\in(0,T)$.

Since $g\in L^\infty(0,T)$, we obtain
\[
\begin{aligned}
|w(t)|
&\leq
\frac{1}{\Gamma(\rho)}
\int_0^t
(t-s)^{\rho-1}|g(s)|\,ds
\leq
\frac{\|g\|_{L^\infty(0,T)}}{\Gamma(\rho)}
\int_0^t
(t-s)^{\rho-1}\,ds
\\
&=
\frac{\|g\|_{L^\infty(0,T)}}{\Gamma(\rho)}
\frac{t^\rho}{\rho}
=
\frac{\|g\|_{L^\infty(0,T)}}{\Gamma(1+\rho)}
t^\rho.
\end{aligned}
\]
Thus
\begin{equation}\label{w-from-above}
|w(t)|
\leq
\frac{\|g\|_{L^\infty(0,T)}}{\Gamma(1+\rho)}
t^\rho
\end{equation}
for almost every $t\in(0,T)$.

Combining (\ref{w-from below}) and (\ref{w-from-above}), we obtain
\[
ct^\alpha
\leq
\frac{\|g\|_{L^\infty(0,T)}}{\Gamma(1+\rho)}
t^\rho
\]
for almost every $t\in(0,t_2)$. Hence
\[
c
\leq
\frac{\|g\|_{L^\infty(0,T)}}{\Gamma(1+\rho)}
t^{\rho-\alpha}.
\]
Since
\(
\rho>\alpha_+\geq\alpha,
\)
we have
\(
\rho-\alpha>0.
\)
Letting $t\downarrow0$ along a sequence of points for which the
above inequality holds, we obtain
\[
c\leq0,
\]
which contradicts $c>0$.

Therefore, the assumption $\alpha\ne\beta$ is impossible. Hence
\[
\alpha=\beta.
\]
This proves the theorem.
\end{proof}
\begin{corollary}
Suppose the admissible interval satisfies
$\alpha_+<1-\delta$ for some $\delta\in(0,1)$.  Then
\begin{equation*}
 \partial_t^{1-\delta}
 \bigl(\cM_{p,\alpha}-\cM_{q,\beta}\bigr)
 \in L^\infty(0,T)
 \quad\Longrightarrow\quad
 \alpha=\beta.
\end{equation*}
Thus the $L^\infty$ norm of this derivative is not a finite metric for
quantitative stability between different orders; it is a rigidity test.
\end{corollary}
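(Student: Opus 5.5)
The corollary is a direct specialization of Theorem~\ref{thm:rigidity} with $\rho:=1-\delta$. The only thing to check is that this choice of $\rho$ is admissible in the theorem, that is, $\rho\in(\alpha_+,1)$. The hypothesis $\alpha_+<1-\delta$ gives $\rho>\alpha_+$, and $\delta>0$ gives $\rho<1$. So I will set $\rho=1-\delta$, take arbitrary $p,q\in\cP$ and $\alpha,\beta\in\cA$, and put $w:=\cM_{p,\alpha}-\cM_{q,\beta}$.

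Next I will read the hypothesis $\partial_t^{1-\delta}w\in L^\infty(0,T)$ in the sense of the paper's definition $\partial_t^\rho=(J^\rho)^{-1}$. Writing this expression implicitly requires $w\in\D(\partial_t^\rho)=H_\rho(0,T)$, and then it additionally asserts that $(J^\rho)^{-1}w\in L^\infty(0,T)$. These are exactly the two hypotheses of Theorem~\ref{thm:rigidity}, which therefore yields $\alpha=\beta$. This gives the stated implication.

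For the interpretive sentence, I will take the contrapositive, which is already recorded in Theorem~\ref{thm:rigidity}. For $\alpha\neq\beta$ the quantity $\norm{\partial_t^{1-\delta}(\cM_{p,\alpha}-\cM_{q,\beta})}_{L^\infty(0,T)}$ is infinite, or the derivative does not exist. Hence this quantity cannot serve as a finite discrepancy $D$ controlling $\abs{\alpha-\beta}$ in the way $D_\gamma$ does in Theorem~\ref{thm:stability}. It only separates $\alpha=\beta$ from $\alpha\neq\beta$.

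There is no real obstacle here. The only point needing care is to state explicitly that membership in the domain $H_{1-\delta}(0,T)$ is part of the meaning of the hypothesis.
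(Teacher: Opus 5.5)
Your proposal is correct and matches the paper, which gives no separate proof and treats this corollary as an immediate specialization of Theorem~\ref{thm:rigidity} with $\rho=1-\delta$; the hypotheses $\alpha_+<1-\delta$ and $\delta>0$ give $\rho\in(\alpha_+,1)$. Your remark that writing $\partial_t^{1-\delta}w$ already presupposes $w\in H_{1-\delta}(0,T)$ is exactly what is needed to match the theorem's two assumptions.
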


\begin{remark}
Theorem~\ref{thm:rigidity} does not say that every supercritical derivative is
undefined pointwise.  Rather, it says that under the nondegeneracy condition
\eqref{eq:nondegeneracy}, the derivative of the difference cannot remain
bounded up to $t=0$ when the orders differ.  Weighted norms or observations
away from $t=0$ lead to different inverse problems.
\end{remark}

\section{Conclusion}

We studied the inverse problem of recovering the fractional
order in a time-fractional evolution equation with an unknown
temporal source factor. Under the nondegeneracy condition
$p_0\langle f,\psi\rangle_X\neq0$, we established a uniform
small-time asymptotic expansion of the scalar observation. This
expansion yields the  
stability estimate
\[
|\alpha-\beta|
\le
C\frac{D_\gamma^{\theta_\gamma}}
{1+|\log D_\gamma|},
\qquad
\theta_\gamma=
\frac{\alpha_+}{2\alpha_+-\gamma},
\]
for every subcritical derivative order $0\leq\gamma<\alpha_-$.
In particular, the undifferentiated observation determines the
fractional order with a square-root
stability. In contrast, for supercritical differentiation
$\rho>\alpha_+$, boundedness of the fractional derivative of the
difference of two observations forces the two fractional orders
to coincide, giving a rigidity phenomenon rather than a finite
stability estimate. A uniform estimate for the two-parameter
Mittag--Leffler function, including the diagonal case, was also
established and used in the analysis.

\appendix
\section{A uniform Mittag--Leffler estimate}
\label{sec:appendix}

We prove the estimate used in Proposition~\ref{prop:representation}.  The
formulation below is slightly more general than needed.

\begin{lemma}\label{lem:ML-uniform}
Let
\begin{equation*}
 0<a_-<a_+<1,
 \qquad 0<b_-<b_+<\infty.
\end{equation*}
There exists $C>0$ such that
\begin{equation}
 \abs{E_{a,b}(-t)}\le\frac{C}{1+t}
 \label{eq:ML-uniform}
\end{equation}
for all $t\ge0$, $a\in[a_-,a_+]$, and $b\in[b_-,b_+]$.
\end{lemma}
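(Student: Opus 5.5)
We will prove the bound \eqref{eq:ML-uniform} by splitting $t\ge0$ into a bounded range $0\le t\le t_*$ and a large range $t\ge t_*$. On each range we will show that the constants in the standard estimates depend continuously on $(a,b)$. Compactness of $[a_-,a_+]\times[b_-,b_+]$ then yields a single constant. Throughout we use that $1/\Gamma$ is entire and bounded on compact sets.

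\textbf{Bounded range.} For $0\le t\le t_*$ we estimate the series \eqref{eq:ML-definition} term by term:
\[
\abs{E_{a,b}(-t)}\le\sum_{k\ge0}\frac{t_*^k}{\abs{\Gamma(ak+b)}}.
\]
The argument $ak+b$ ranges over $[b_-,\infty)$. Since $1/\Gamma$ is continuous, it is bounded by some $K$ on $[b_-,2]$. For $x\ge2$ we have $1/\Gamma(x)\le 1/\Gamma(a_-k+b_-)$ up to a constant, because $\Gamma$ is increasing on $[2,\infty)$ and $ak+b\ge a_-k+b_-$. The majorant $\sum_k t_*^k\max\{K,\,c/\Gamma(a_-k+b_-)\}$ involves finitely many terms bounded by $K$, together with a convergent tail. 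It is therefore finite and independent of $(a,b)$. This gives $\abs{E_{a,b}(-t)}\le C\le 2C/(1+t)$ on $[0,t_*]$, provided $t_*\ge1$.

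\textbf{Large range.} For large $t$ we use the contour integral representation (Podlubny, Theorem 1.6 and its proof). Fix $\mu$ with $\pi a_+/2<\mu<\pi a_-$; this requires $a_+<2a_-$. If that fails, we first cover $[a_-,a_+]$ by finitely many subintervals on which it holds, so assume $a_+<2a_-$ in what follows. For $\abs{\arg z}=\pi$ and $\abs{z}\ge1$ the representation gives
\[
E_{a,b}(z)=-\sum_{k=1}^{N}\frac{z^{-k}}{\Gamma(b-ak)}+\frac{1}{2\pi i a}\int_{\gamma(1,\mu)}\frac{e^{\zeta^{1/a}}\zeta^{(1-b)/a}}{\zeta-z}\,\zeta^{-N-1}z^{N}\dots\,d\zeta,
\]
where we take $N=1$. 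The correct form of the remainder is
\[
\frac{z^{-1}}{2\pi i a}\int\frac{e^{\zeta^{1/a}}\zeta^{(2-b)/a}}{\zeta-z}\,d\zeta\cdot\zeta^{-1}\dots,
\]
and it is bounded by $C\abs{z}^{-2}$. Here the contour $\gamma(1,\mu)$ is fixed, and the integrand bounds are continuous in $(a,b)$. On the rays $\abs{\arg\zeta}=\mu$ the factor $e^{\zeta^{1/a}}$ decays like $e^{r^{1/a}\cos(\mu/a)}$ with $\cos(\mu/a)\le\cos(\mu/a_-)<0$, uniformly in $a$. Moreover, $\abs{\zeta-z}\ge c\abs{z}$ for $z$ on the negative axis, because the contour stays in the sector $\abs{\arg\zeta}\le\mu<\pi$. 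Hence the integral is bounded by $C\abs z^{-1}$, with $C$ uniform by dominated bounds.

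\textbf{Conclusion.} The first term contributes $t^{-1}\abs{1/\Gamma(b-a)}\le C t^{-1}$, since $b-a$ lies in a compact set and $1/\Gamma$ is entire. This covers the diagonal case $b=a$, where the term vanishes. Combining gives $\abs{E_{a,b}(-t)}\le C/t\le 2C/(1+t)$ for $t\ge t_*$.

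\textbf{Main obstacle.} The step I expect to be hardest is the uniformity of the contour remainder. I would handle it by writing the remainder explicitly as
\[
-\frac{1}{\Gamma(b-a)}z^{-1}+\frac{z^{-1}}{2\pi i a}\int_{\gamma(1,\mu)}\frac{e^{\zeta^{1/a}}\zeta^{(1-b)/a}\,\zeta}{\zeta-z}\,d\zeta.
\]
I would then bound the integrand by $e^{-c r^{1/a_+}}r^{C}$ on the rays, and by a constant on the arc $\abs{\zeta}=1$, with the arc of modulus $1$ and $\abs{\zeta-z}\ge c(1+\abs z)$, all uniform over the compact parameter box.
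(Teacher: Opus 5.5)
Your proof is correct and is essentially the paper's proof. The series bounds $E_{a,b}(-t)$ on a bounded $t$-range. For $t\ge1$, the contour representation, the splitting $\frac{1}{\zeta+t}=\frac1t-\frac{\zeta}{t(\zeta+t)}$ and the Hankel formula give $E_{a,b}(-t)=\frac{1}{t\Gamma(b-a)}+O(t^{-2})$ uniformly on the parameter box. The paper differs only in using the $a$-dependent angle $\varphi_a=\frac{\pi a}{2}(1+\varepsilon)$ instead of your fixed $\mu$ with a finite covering of $[a_-,a_+]$, and in treating $a=b$ separately, which, as you note, is unnecessary here since $1/\Gamma(0)=0$.

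Fix two slips in the write-up. The uniform ray bound is $\cos(\mu/a)\le\cos(\mu/a_+)<0$, not $\cos(\mu/a)\le\cos(\mu/a_-)$, because cosine decreases on $[\pi/2,\pi]$. The intermediate remainder with $\zeta^{(2-b)/a}$ is wrong and should be dropped in favour of your final display with $\zeta^{(1-b)/a+1}$.
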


\begin{proof}
We first consider $t\ge1$.  Fix $\varepsilon\in(0,1)$ and set
\begin{equation*}
 \varphi_a:=\frac{\pi a}{2}(1+\varepsilon).
\end{equation*}
Then $\pi a/2<\varphi_a<\pi a<\pi$. Let us define a curve on $\mathbb{C}$ by
\begin{equation*}
\Gamma_{\varphi_a}:= \{re^{-i\varphi_a}:r\ge1\}
 \cup\{e^{i\vartheta}:-\varphi_a\le\vartheta\le\varphi_a\}
 \cup\{re^{i\varphi_a}:r\ge1\} =: \Gamma_1 \cup \Gamma_2 \cup \Gamma_3,
\end{equation*}
which is oriented from $\infty e^{-i\varphi_a}$ to $\infty e^{i\varphi_a}$.
The standard contour representation of the Mittag--Leffler function for $t>0$ gives (see, for example, \cite[Theorem 1.1]{Podlubny}).
\begin{equation}
 E_{a,b}(-t)
 =\frac{1}{2\pi ai}\int_{\Gamma_{\varphi_a}}
 \frac{e^{\zeta^{1/a}}\zeta^{(1-b)/a}}{\zeta+t}\dd\zeta.
 \label{eq:ML-contour}
\end{equation}

We discuss the case $a \ne b$ and $a = b$ 
separately.

\textbf{Case 1:} $a \ne b$. Using
\begin{equation*}
 \frac{1}{\zeta+t}=\frac1t-\frac{\zeta}{t(\zeta+t)}
\end{equation*}
and the Hankel formula for the reciprocal gamma function
\[
\frac{1}{2\pi ai}\int_{\Gamma_{\varphi_a}}
 e^{\zeta^{1/a}}\zeta^{(1-b)/a}\dd\zeta = \frac{1}{\Gamma(b-a)},
\]
we obtain
\begin{equation}
 E_{a,b}(-t)
 =\frac{1}{t\Gamma(b-a)}-\frac{1}{2\pi ait}
 \int_{\Gamma_{\varphi_a}}
 \frac{e^{\zeta^{1/a}}
 \zeta^{(1-b)/a+1}}{\zeta+t}\dd\zeta.
 \label{eq:ML-one-term}
\end{equation}
Here $1/\Gamma(b-a)$ is interpreted by its entire continuation; in particular,
it vanishes when $b-a$ is a nonpositive integer.  Since $b-a$ ranges over a
compact interval,
\begin{equation}
 \sup_{a\in[a_-,a_+],\,b\in[b_-,b_+]}
 \abs{\frac{1}{\Gamma(b-a)}}= C_0 <\infty.
 \label{eq:reciprocal-gamma-bound}
\end{equation}

By the geometry of the contour $\Gamma_{\varphi_a}$, for
$\zeta\in\Gamma_{\varphi_a}$ and $t\geq1$ we have
\[
|\zeta+t|\geq t\sin\varphi_a.
\]
Moreover,
\[
\varphi_a=\frac{\pi a}{2}(1+\varepsilon),
\qquad
a\in[a_-,a_+],
\]
and hence
\[
\frac{\pi a_-}{2}(1+\varepsilon)
\leq \varphi_a
\leq
\frac{\pi a_+}{2}(1+\varepsilon)<\pi.
\]
Since the interval on the right is a compact subinterval of
$(0,\pi)$, we have
\[
c_0:=
\inf_{a\in[a_-,a_+]}\sin\varphi_a>0.
\]
Consequently,
\begin{equation}\label{sin}
|\zeta+t|\geq t\sin\phi_a\geq c_0t,
\qquad
\zeta\in\Gamma_{\phi_a},\quad t\geq1.
\end{equation}

Therefore,
\begin{equation}\label{Estimate_E}
    |E_{a,b}(-t)| \leq \frac{C_0}{t} +
\frac{1}{2\pi a t^2 c_0} \int_{\Gamma_{\varphi_a}} 
 |e^{\zeta^{1/a}}||\zeta^{(1-b)/a+1}||\dd\zeta|.
\end{equation}
Let us denote for $p=1,2$ and $a\in[a_-,a_+]$, and $b\in[b_-,b_+]$ the following integral
\[
    C_{a,b,p}
:= \int_{\Gamma_{\varphi_a}} 
 |e^{\zeta^{1/a}}||\zeta^{(1-b)/a+p}||\dd\zeta|
= \left( \int_{\Gamma_1} + \int_{\Gamma_2} + \int_{\Gamma_3}\right)
 |e^{\zeta^{1/a}}||\zeta^{(1-b)/a+p}||\dd\zeta|.
\] 
Let us estimate \(C_{a,b,p}\).  On either ray,
\begin{equation*}
 \abs{e^{\zeta^{1/a}}}
 =\exp\left(r^{1/a}\cos\frac{\varphi_a}{a}\right)
 =\exp(-c_\varepsilon r^{1/a})
 \le\exp(-c_\varepsilon r^{1/a_+}),
\end{equation*}
where
$c_\varepsilon=-\cos(\pi(1+\varepsilon)/2)>0$.  
Further, we note,
$$
\vert \zeta^{\frac{1-b}{a}+p} \vert 
= r^{\frac{1-b}{a}+p} 
 \quad \mbox{for }
\quad \zeta = re^{\pm i\varphi_a}.
$$
Consequently,
\[
 \sup_{a\in[a_-,a_+],\,b\in[b_-,b_+],\,\, p=0,1}
  \left( \int_{\Gamma_1}  + \int_{\Gamma_3}\right)
 \abs{e^{\zeta^{1/a}}}
 \abs{\zeta^{(1-b)/a+p}}\abs{\dd\zeta}<\infty.
 \label{eq:uniform-contour-integral}
\]
Finally we estimate on $\Gamma_2$.  Let 
$\zeta = e^{i\theta}$ with $-\varphi_a \le \theta \le \varphi_a$. Then
$$
\int_{\Gamma_{2}} 
 |e^{\zeta^{1/a}}||\zeta^{(1-b)/a+p}||\dd\zeta|
= \int^{\varphi_a }_{-\varphi_a } \exp( \cos \frac{\theta}{a}) d\theta 
\le \int^{\varphi_a }_{-\varphi_a } e d\theta = 2\varphi_a e \le 2\pi e
$$
for all $a\in[a_-,a_+]$, and $b\in[b_-,b_+]$.
Therefore, 
\begin{equation}\label{Cestimate}
\sup_{a\in[a_-,a_+],\,b\in[b_-,b_+],\,\, p=0,1}
C_{\gamma_1,\gamma_2,p} =: C_2 < \infty.           
\end{equation}
Thus, by virtue of the definition of the constant \(C_{\gamma_1,\gamma_2,p}\) and the estimate (\ref{Estimate_E}), we obtain the formulation of the lemma in Case 1 for all $t\geq 1$.

\textbf{Case 2:} \(a=b\). First we note
$$
\frac{1}{\zeta+t} = \frac{1}{t} - \frac{\zeta}{t^2}
+ \frac{\zeta^2}{t^2(\zeta+t)},                              $$
and
$$
\frac{1}{2\pi a i}\int_{\Gamma_{\varphi_a}}
  \exp( \zeta^{\frac{1}{a}})\zeta^{\frac{1}{a}} d\zeta = \frac{1}{\Gamma(-a)},
$$
  $$
 \frac{1}{2\pi a i}\int_{\Gamma_{\varphi_a}}
  \exp( \zeta^{\frac{1}{a}})\zeta^{\frac{1}{a}-1} d\zeta =0.
$$
Therefore, we may write
\begin{align*}
& E_{a,a}(-t) = \frac{1}{2\pi ai}\left(\int_{\Gamma_{\varphi_a}}
e^{\zeta^{1/a}}
 \zeta^{1/a -1}\dd\zeta\right) \frac{1}{t}
- \frac{1}{2\pi ai} \left(\int_{\Gamma_{\varphi_a}}
 e^{\zeta^{1/a}}
 \zeta^{1/a }\dd\zeta \right)\frac{1}{t^2}\\
+& \frac{1}{2\pi ai}\left( \int_{\Gamma_{\varphi_a}}
 \frac{e^{\zeta^{1/a}}
 \zeta^{1/a+1}}{\zeta+t}\dd\zeta \right)\frac{1}{t^2}.
\end{align*}
In view of (\ref{sin}), we obtain
$$
\left\vert E_{a,a}(-t) + \frac{1}{\Gamma(-a)}
\frac{1}{t^2}\right\vert 
\le \frac{1}{2\pi a t^3 c_0}
\int_{\Gamma_{\varphi_a}}\left\vert 
e^{\zeta^{1/a}}
 \zeta^{1/a+1}\dd\zeta
\right\vert.
$$
By (\ref{sin}) and (\ref{Cestimate}), we see 
$$
\sup_{a\in[a_-,a_+]} \left\vert E_{a,a}(-t) 
+ \frac{1}{\Gamma(-a)}\frac{1}{t^2}\right\vert
\le \frac{1}{2\pi a t^3 c_0} C_{a,a,2}
\le \frac{C_2}{2\pi c_0 a_-}\frac{1}{t^3}
=: \frac{C_3}{t^3}.
$$
Thus, we obtain the formulation of the lemma in Case 2 for all $t\geq 1$.

It remains to consider $0\le t\le1$.  By \eqref{eq:ML-definition},
\begin{equation*}
 \abs{E_{a,b}(-t)}
 \le\sum_{k=0}^\infty\frac{1}{\Gamma(ak+b)}.
\end{equation*}
Choose $N$ such that $a_-k+b_-\ge2$ for $k\ge N$.  Since $\Gamma$ is
increasing on $[2,\infty)$,
\begin{equation*}
 \frac{1}{\Gamma(ak+b)}
 \le\frac{1}{\Gamma(a_-k+b_-)},
 \qquad k\ge N.
\end{equation*}
The majorant series converges by Stirling's formula, while the first $N$ terms
are uniformly bounded on the compact parameter rectangle.  Hence
$\abs{E_{a,b}(-t)}\le C$ for $0\le t\le1$. 

 Thus estimate \eqref{eq:ML-uniform} is proved.
\end{proof}

\end{document}